\documentclass[
	english
]{scrartcl}

\usepackage[T1]{fontenc}
\usepackage[utf8]{inputenc}

\usepackage{amsmath,amsfonts,amsthm,amssymb}
\usepackage[colorlinks,linkcolor=blue,citecolor=blue]{hyperref}

\usepackage{preamble}
\usepackage{booktabs,multirow}

\usepackage{subfig}

\usepackage{enumitem}
\setenumerate{label=(\roman*)}

\numberwithin{equation}{section} 

\usepackage[capitalise,nameinlink]{cleveref}

\usepackage{lmodern}
\usepackage[final]{microtype}

\usepackage[normalem]{ulem}

\usepackage{mathrsfs}

\usepackage{algpseudocode}

\newif\ifbiber
\bibertrue

\ifbiber
\usepackage[
	style=authoryear-comp,
	sorting=nyt,
	url=true,
	doi=true,                
	isbn=false,              
	eprint=true,
	giveninits=true,         
	uniquename=init,         
	maxbibnames=99,          
	maxcitenames=3,
	uniquelist=minyear,
	dashed=false,            
	sortcites=false,          
	backend=biber,           
	safeinputenc,            
	date=year,               
]{biblatex}

\addbibresource{world.bib}
\addbibresource{references.bib}

\DeclareCiteCommand{\cite}{%
	\ifbibmacroundef{cite:init}{}{\usebibmacro{cite:init}}\usebibmacro{prenote}%
}{%
	\usebibmacro{citeindex}%
	\printtext[bibhyperref]{\usebibmacro{cite}}%
}{%
	\ifbibmacroundef{cite:init}{\multicitedelim}{}%
}{%
	\usebibmacro{postnote}%
}%
\DeclareCiteCommand{\parencite}[\mkbibbrackets]{%
	\ifbibmacroundef{cite:init}{}{\usebibmacro{cite:init}}\usebibmacro{prenote}%
}{%
	\usebibmacro{citeindex}%
	\printtext[bibhyperref]{\usebibmacro{cite}}%
}{%
	\ifbibmacroundef{cite:init}{\multicitedelim}{}%
}{%
	\usebibmacro{postnote}%
}%

\let\cite\parencite

\DeclareNameAlias{sortname}{family-given}
\DeclareNameAlias{default}{family-given}

\else

\usepackage{doi,natbib}

\fi

\usepackage{mathtools}


\DeclarePairedDelimiterX\set[1]\{\}{#1}
\DeclarePairedDelimiterX\seq[1](){#1}

\DeclarePairedDelimiterX\dual[2]{\langle}{\rangle}{#1,#2}
\DeclarePairedDelimiterX\innerprod[2](){#1,#2}


\DeclarePairedDelimiter\norm{\lVert}{\rVert}


\DeclarePairedDelimiter\bracks[]

\newcommand\N{\mathbb{N}}
\newcommand\R{\mathbb{R}}

\renewcommand\d{\mathop{}\!\mathrm{d}}

\newcommand\dt{\d t}

\newcommand{\dualspace}{^*}

\DeclareMathOperator\id{id}

\DeclareMathOperator\lin{lin}

\DeclareMathAlphabet{\mathpzc}{OT1}{pzc}{m}{it}

\newcommand\orcid[1]{%
	\hspace{.25em}%
	\href{http://orcid.org/#1}{%
		\protect\includegraphics[height=1em]{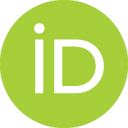}%
	}%
	\hspace{.25em}%
}

\newtheorem{theorem}{Theorem}[section]
\newtheorem{lemma}[theorem]{Lemma}
\newtheorem{assumption}[theorem]{Assumption}

\newtheorem{definition}[theorem]{Definition}

\crefname{assumption}{Assumption}{Assumptions}

\def\cleverreffix#1{\AddToHook{env/#1/begin}{\crefalias{theorem}{#1}}}
\cleverreffix{lemma}
\cleverreffix{assumption}
\cleverreffix{corollary}
\cleverreffix{remark}
\cleverreffix{definition}
\cleverreffix{example}

\begin{document}
\title{
	Characterization of Hilbertizable spaces via convex functions
}

\author{%
	Nicolas Borchard%
	\footnote{%
		Brandenburgische Technische Universität Cottbus-Senftenberg,
		Institute of Mathematics,
		03046 Cottbus, Germany,
		\email{nicolas.borchard@b-tu.de}%
	}
	\orcid{0009-0007-7358-7737}%
	\and
	Gerd Wachsmuth%
	\footnote{%
		Brandenburgische Technische Universität Cottbus-Senftenberg,
		Institute of Mathematics,
		03046 Cott\-bus, Germany,
		\email{gerd.wachsmuth@b-tu.de}%
	}
	\orcid{0000-0002-3098-1503}%
}

\maketitle

\begin{abstract}
	We show that the existence of a strongly convex function with a Lipschitz derivative
	on a Banach space
	already implies that the space is isomorphic to a Hilbert space.
	Similarly, if both a function and its convex conjugate are $C^2$ then the underlying space is also isomorphic to a Hilbert space.
\end{abstract}

\begin{keywords}
	Hilbertizable space,
	strong convexity,
	Lipschitz continuous derivative,
	convex conjugate function
\end{keywords}

\begin{msc}
	46C15,
	46N10
\end{msc}

\section{Introduction}

Convex functions which are strongly convex and which have a Lipschitz continuous derivative
are very useful
for, e.g., analyzing the convergence of first-order optimization methods.
In particular, it allows for an estimate of the convergence rate of
the gradient method, see \cite[Theorem~2.1.15]{Nesterov2004}.
This theorem is posed in $\R^n$,
but can be generalized to Hilbert spaces.
One could pose the question
whether it can be further generalized to arbitrary Banach spaces.
However, we will show that
the existence of a strongly convex function with Lipschitz continuous derivative
already implies that the space is isomorphic to a Hilbert space,
see \cref{thm:isomorphic_to_Hspace}.
Conversely,
given a space which is not isomorphic to a Hilbert space
(like $\ell^p$ or $L^p(0,1)$ for $p \ne 2$),
it is not possible to construct a strongly convex function
which has a Lipschitz continuous derivative.

There have been other characterizations of Hilbertizable spaces,
i.e., spaces which are isomorphic to a Hilbert space.
In the context of infinite-dimensional optimization,
it was shown in \cite{Harder2018}
that if a Legendre form exists on a reflexive Banach space, then the space is
isomorphic to a Hilbert space.

Properties of uniformly convex and uniformly smooth convex functions
(which generalize strongly convex functions and convex functions with Lipschitz derivatives)
can be found in
\cite{AzePenot1995} and \cite[Section~3.5]{Zalinescu2002}.
In particular,
these results can be applied to strongly convex functions
and convex functions with Lipschitz derivatives.

We extend \cref{thm:isomorphic_to_Hspace}
to monotone operators,
that is, the existence of a strongly monotone and Lipschitz continuous operator
implies that the underlying space is Hilbertizable,
see \cref{thm:operator}.
Note that the existence of such an operator
on a Banach space
has been assumed in some works in the literature,
see, e.g.,
\cite[Theorems~4.13, 4.14 under conditions (a)*, (b)]{IzuchukwuReichShehu2022}
and
\cite[Theorem~3.1]{MendyMendy2023}.
Hence, our findings show that
these results are only applicable in Hilbertizable spaces.

Finally,
in \cref{thm:is_Hilbert}
we show
that the existence of a convex function $f$
such that $f$ and its convex conjugate $f^*$
are both $C^2$,
implies Hilbertizablility.

\section{Strong convexity and Lipschitz derivative}
In the next assumption,
we fix the class of functions
which are locally strongly convex and have a Lipschitz derivative.
\begin{assumption}
	\label{assu:C1_strongly_cvx_Lip_der}
	Let $X$ be a Banach space,
	$f : X \to \R$ be a function,
	$\bar x \in X$ and
	$\varepsilon > 0$.
	On the open ball in $X$ centered at $\bar x$ with a radius of $\varepsilon$, i.e. $U_\varepsilon^X(\bar x)$,
	let $f$ be of class $C^1$, $\mu$-strongly convex with a Lipschitz derivative for a constant $L > 0$,
	i.e.,
	\begin{align*}
		f( \lambda x + (1-\lambda) y)
		&
		\le
		\lambda f(x) + (1-\lambda) f(y) - \frac{\mu}{2} \lambda (1-\lambda) \norm{x - y}_X^2
		,
		\\
		\norm{f'(x) - f'(y)}_{X\dualspace}
		&\le
		L \norm{x - y}_X
	\end{align*}
	holds for all $x,y \in U_\varepsilon^X(\bar x)$
	and all $\lambda \in [0,1]$.
\end{assumption}
It is well known that the strong convexity of $f$
implies
\begin{equation}
	\label{eq:strong_cvx}
	\mu \norm{x - y}^2
	\le
	\dual{f'(x) - f'(y)}{ x - y }_X
	\qquad\forall x,y \in U_\varepsilon^X(\bar x)
	.
\end{equation}
Indeed,
from the strong convexity we get
\begin{equation*}
	\frac{f( \lambda x + (1-\lambda) y) - f(y)}{\lambda}
	\le
	f(x) - f(y) - \frac{\mu}{2} (1-\lambda) \norm{x - y}_X^2
\end{equation*}
and $\lambda \searrow 0$ implies
\begin{equation*}
	\dual{f'(y)}{ x - y }_X \le f(x) - f(y) - \frac\mu2 \norm{x - y}^2
	.
\end{equation*}
The same inequality holds with $x$ and $y$ interchanged
and addition of both inequalities gives
\eqref{eq:strong_cvx}.

First, we check on every finite-dimensional subspace of $X$,
we can define an equivalent norm
and the constants of equivalence are bounded by $\sqrt\mu$ and $\sqrt L$.
\begin{lemma}
	\label{lem:inner_product_on_fin_dim_space}
	Let \Cref{assu:C1_strongly_cvx_Lip_der} be fulfilled.
	For every finite-dimensional subspace $U$ of $X$,
	there exists an inner product $\innerprod{\cdot}{\cdot}_U : U \times U \to \R$
	such that the norm $\norm{\cdot}_X$ and $\norm{\cdot}_U$ are equivalent.
	In particular,
	\[
		\sqrt{\mu} \norm{x}_X
		\le
		\norm{x}_U
		\le
		\sqrt{L} \norm{x}_X
		\qquad\forall x \in U
	\]
	and all finite-dimensional subspaces $U$.
\end{lemma}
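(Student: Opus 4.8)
The plan is to replace $f$ by a genuinely smooth function on the finite-dimensional subspace and then to read off the inner product from its Hessian at $\bar x$; the two constants $\sqrt\mu$ and $\sqrt L$ will come from \eqref{eq:strong_cvx} together with the Lipschitz bound on $f'$.

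First I would fix a finite-dimensional subspace $U$ of $X$, put $V := U \cap U_\varepsilon^X(\bar x)$ (an open neighborhood of $\bar x$ in $U$, since on $U$ the norm $\norm{\cdot}_X$ induces the Euclidean topology) and consider $g := f|_V$. Then $g$ is of class $C^1$ with $g'(x) = f'(x)|_U \in U\dualspace$, and restricting a functional does not increase its dual norm, so $\norm{g'(x) - g'(y)}_{U\dualspace} \le \norm{f'(x) - f'(y)}_{X\dualspace} \le L\norm{x-y}_X$ on $V$. Together with $\dual{\xi}{u}_X \le \norm{\xi}_{U\dualspace}\norm{u}_X$ and \eqref{eq:strong_cvx} this yields
\[
	\mu \norm{x-y}_X^2
	\le
	\dual{g'(x) - g'(y)}{x-y}_X
	\le
	L \norm{x-y}_X^2
	\qquad \forall x,y \in V .
\]

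Next I would smooth $g$ out. Identifying $U$ with $\R^{\dim U}$ via a basis and fixing a standard symmetric mollifier $\rho_\delta \ge 0$ with $\int \rho_\delta = 1$ and $\supp\rho_\delta$ contained in the $\delta$-ball, I would set $g_\delta := g * \rho_\delta$; for $\delta$ small enough this is defined and of class $C^\infty$ on a neighborhood of $\bar x$. From $g_\delta'(x) - g_\delta'(y) = \int \bigl(g'(x-z) - g'(y-z)\bigr)\,\rho_\delta(z)\,\d z$ and the fact that $\norm{x-y}_X$ does not depend on $z$, the same two-sided estimate survives for $g_\delta$ on a (possibly smaller) neighborhood of $\bar x$. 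Finally I would set $B := g_\delta''(\bar x)$, a symmetric bilinear form on $U$ by the symmetry of second derivatives; inserting $y = \bar x + t v$ into the estimate for $g_\delta$, dividing by $t^2$ and letting $t \searrow 0$ gives $\mu\norm{v}_X^2 \le B[v,v] \le L\norm{v}_X^2$ for all $v \in U$. Since $\mu > 0$, $B$ is positive definite, so $\innerprod{u}{v}_U := B[u,v]$ defines an inner product on $U$ whose induced norm $\norm{v}_U := B[v,v]^{1/2}$ satisfies $\sqrt\mu\norm{v}_X \le \norm{v}_U \le \sqrt L\norm{v}_X$; in particular $\norm{\cdot}_U$ and $\norm{\cdot}_X$ are equivalent on $U$.

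The only delicate point is the smoothing step: besides the harmless domain bookkeeping (keeping $g_\delta$ defined near $\bar x$ by shrinking $\delta$), one must check that convolution preserves the monotonicity-type inequality, which is immediate from the integral representation of $g_\delta'$ above. An alternative would be to differentiate the Lipschitz map $g'$ at a point of differentiability (Rademacher's theorem), but then one incurs the extra task of showing that the resulting matrix is symmetric; mollifying avoids this, since $C^\infty$ functions have symmetric Hessians automatically.
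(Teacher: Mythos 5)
Your core construction is correct, but it takes a genuinely different route from the paper at the decisive step. The paper also restricts $f$ to $U$ and sandwiches a second-order object between $\mu\norm{h}_X^2$ and $L\norm{h}_X^2$; however, it obtains that object by applying Rademacher's theorem to the locally Lipschitz map $g'$, taking $A := g''(\bar u)$ at a point $\bar u$ of differentiability, and citing a classical result (Cartan) for the symmetry of a second derivative that exists only at that single point; the sandwich then follows from difference quotients exactly as in your $t \searrow 0$ argument. You instead mollify $g$ on the finite-dimensional subspace and take the Hessian of the smooth $g_\delta$ at the base point, so symmetry comes for free from Schwarz's theorem and Rademacher is not needed; the price is the convolution bookkeeping, which you handle correctly, since $\dual{g_\delta'(x)-g_\delta'(y)}{x-y} = \int \dual{g'(x-z)-g'(y-z)}{(x-z)-(y-z)}\,\rho_\delta(z)\,\d z$ transfers the two-sided bound to $g_\delta$ on a slightly smaller ball. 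Both routes are valid: yours trades the pointwise-symmetry citation for an elementary smoothing argument, the paper's avoids any convolution at the cost of two classical theorems.

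One step does need repair. You set $V := U \cap U_\varepsilon^X(\bar x)$ and call it an open neighborhood of $\bar x$ in $U$, but nothing guarantees $\bar x \in U$: the lemma quantifies over \emph{all} finite-dimensional subspaces, and if the distance from $\bar x$ to $U$ is at least $\varepsilon$ this intersection is empty, so there is no point around which to mollify and no center at which to take the Hessian. The fix is the one-line normalization the paper performs at the outset: replace $f$ by $x \mapsto f(x+\bar x)$, which satisfies \cref{assu:C1_strongly_cvx_Lip_der} with the same constants $\mu$, $L$ and with center $0 \in U$; since the conclusion of the lemma does not involve $\bar x$ or $f$, one may assume $\bar x = 0$ without loss of generality, and with that normalization your argument goes through as written.
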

\begin{proof}
	Without loss of generality, let $\bar x = 0$.
	We define the function $g := f|_U : U \to \R$.
	Obviously, $g$ is strongly convex and has a Lipschitz derivative
	in a neighborhood of $0$ and it inherits the constants of $f$.
	As $g'$ is a locally Lipschitz continuous function between finite-dimensional spaces,
	we can employ the well-known Rademacher theorem and obtain $\bar u \in U$ such that $g'$ is differentiable in $\bar u$.
	We set $A := g''(\bar u) : U \times U \to \R$.
	Note that $A$ is symmetric due to \cite[Theorem~5.1.1]{Cartan1967}.
	Since $g$ is twice differentiable at $\bar u$,
	we have
	\begin{equation*}
		\frac{g'(\bar u + th) - g'(\bar u)}{t}
		\to
		A[h,\cdot]
		\qquad\text{as } t \to 0
	\end{equation*}
	for each fixed $h \in U$.
	From the Lipschitz continuity of $f'$ and
	\eqref{eq:strong_cvx}
	we get
	\begin{equation*}
		\mu \norm{h}_X^2
		\le
		\dual*{\frac{g'(\bar u + th) - g'(\bar u)}{t}}{ h }_U
		=
		\dual*{\frac{f'(\bar u + th) - f'(\bar u)}{t}}{ h }_X
		\le
		L \norm{h}_X^2
		.
	\end{equation*}
	Consequently,
	\begin{equation*}
		\mu \norm{h}_X^2
		\le
		A[h,h]
		\le
		L \norm{h}_X^2
		\qquad\forall h \in U
		.
	\end{equation*}
	In total, $A$ is an inner product
	as it is bilinear, symmetric, and positive-definite.
	The norm generated by $A$ is equivalent to the original norm
	with constants $\sqrt{\mu}$ and $\sqrt{L}$.
\end{proof}

\begin{definition}
	\cite[Section~9.2]{LedouxTalagrand1991}
	\label{def:type_cotype}
	For every $n \in \N$, we define $E(n) := \set{-1,1}^n$.
	A Banach space $X$ is said to be of type $q \in [1,2]$ if
	there exists a constant $C > 0$
	such that
	\[
		\frac 1 {2^n} \sum_{\varepsilon \in E(n)} \norm*{\sum_{k=1}^n \varepsilon_k x_k}_X^q
		\le
		C^q \sum_{k=1}^n \norm{x_k}_X^q
	\]
	holds for all $n \in \N$ and all $x_1, \ldots, x_n \in X$.
	The infimum over all possible constants is denoted by $T_q(X)$.
	Similarly, it is of cotype $q \in [2, \infty)$ if
	there exists a constant $C > 0$ such that
	\[
		\frac 1 {C^q} \sum_{k=1}^n \norm{x_k}_X^q
		\le
		\frac 1 {2^n} \sum_{\varepsilon \in E(n)} \norm*{\sum_{k=1}^n \varepsilon_k x_k}_X^q
	\]
	holds for all $n \in \N$ and all $x_1, \ldots, x_n \in X$.
	The infimum over all possible constants is denoted by $C_q(X)$.
\end{definition}

Let $(\Omega, \mathcal{A}, \mu)$ be a measure space.
Is is known from \cite[p.~247]{LedouxTalagrand1991} that
$L^p(\mu)$ for $p \in [1,2]$
is of type $q$ for all $q \in [p,2]$
and of cotype $q$ for all $q \in \set{2}$.
Similarly,
$L^p(\mu)$ for $p \in [2,\infty)$
is of type $q$ for all $q \in \set{2}$
and of cotype $q$ for all $q \in [p, \infty)$.
In the case that $L^p(\mu)$ is infinite-dimensional,
it is of no other type or cotype.
This covers the special cases $L^p(\Omega)$ (for $\Omega \subset \R^d$ equipped with the Lebesgue measure)
and $\ell^p$.

An easy calculation shows that
any Hilbert space $H$ has type $2$ and cotype $2$,
and we have $C_2(H) = T_2(H) = 1$
(unless $H = \set{0}$).
It is easy to check that the type and cotype of a Banach space
does not change if one uses an equivalent norm.
Hence,
any space which is isomorphic to a Hilbert space
also has type $2$ and cotype $2$.
In particular,
the spaces $\ell^p$ are not isomorphic to a Hilbert space
for $p \ne 2$.
The same applies to $L^p(\mu)$ if it is infinite dimensional.

Moreover, the next result shows that the property of
having type $2$ and cotype $2$
already
characterizes Hilbertizable spaces.
\begin{theorem}
	\cite[Prop.~3.1]{Kwapien1972}
	\label{thm:kwapien}
	A space is isomorphic to a Hilbert space if and only if it is of type $2$ and cotype $2$.
\end{theorem}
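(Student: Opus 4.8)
One direction is already contained in the discussion preceding the statement: a Hilbert space $H$ satisfies $T_2(H) = C_2(H) = 1$, and both type and cotype are unaffected by passing to an equivalent norm, so every space isomorphic to a Hilbert space is of type $2$ and cotype $2$. For the converse the plan is to argue in two stages --- a \emph{localization} to finite-dimensional subspaces and a \emph{finite-dimensional comparison} with Euclidean space.

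\emph{Localization.} The first step is to reduce the claim to the following assertion: there is a constant $C$, depending only on $T_2(X)$ and $C_2(X)$, such that the Banach--Mazur distance satisfies $d(U, \ell_2^{\dim U}) \le C$ for every finite-dimensional subspace $U \subseteq X$. Granting this, one chooses for each finite-dimensional $U$ an inner product $\innerprod{\cdot}{\cdot}_U$ with $\norm{x}_X \le \innerprod{x}{x}_U^{1/2} \le C \norm{x}_X$ for all $x \in U$ --- this is \cref{lem:inner_product_on_fin_dim_space}, but now with $C$ independent of $U$. Since the sets $\set{U : U \supseteq U_0}$ have the finite intersection property --- the sum of finitely many prescribed finite-dimensional subspaces belongs to each of the corresponding sets --- they generate a proper filter that extends to a free ultrafilter $\mathcal{U}$. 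For $x \in X$ set $q(x) := \lim_{\mathcal{U}} \innerprod{x}{x}_U$, the limit ranging over $U \ni x$; it exists because the values lie in $[\norm{x}_X^2, C^2 \norm{x}_X^2]$. As each $\innerprod{\cdot}{\cdot}_U$ obeys the parallelogram law and, for any two fixed vectors, the set of subspaces containing both belongs to $\mathcal{U}$, the form $q$ obeys the parallelogram law as well, and clearly $q(\lambda x) = \lambda^2 q(x)$. Hence $\norm{x}_H := q(x)^{1/2}$ is a Hilbert-space norm equivalent to $\norm{\cdot}_X$, so $X$ is isomorphic to a Hilbert space.

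\emph{Finite-dimensional comparison.} The inequalities defining type $2$ and cotype $2$ for vectors lying in a subspace are among those for vectors in $X$, so every finite-dimensional $U \subseteq X$ inherits $T_2(U) \le T_2(X)$ and $C_2(U) \le C_2(X)$; it therefore suffices to prove that an $n$-dimensional normed space $U$ with $T_2(U) \le T$ and $C_2(U) \le C$ satisfies $d(U, \ell_2^n) \le c\,TC$ for a universal constant $c$. This is the substance of Kwapie\'n's theorem, and I would follow his route: (i) replace Rademacher by Gaussian averages --- for $q = 2$ this alters $T_2$ and $C_2$ only by universal factors --- so as to exploit rotational invariance of the standard Gaussian vector; (ii) work with the Gaussian $\ell$-norm $\ell(S) := \bigl(\mathbb{E}\norm*{\sum_i \gamma_i S e_i}^2\bigr)^{1/2}$ on operators $S : \ell_2^n \to U$, which by that invariance does not depend on the chosen orthonormal basis; (iii) observe that cotype $2$ of $U$ forces type $2$ of $U\dualspace$ with comparable constant, so that both $U$ and $U\dualspace$ are controlled; (iv) equip $U$ with an optimally balanced Euclidean structure --- a Lewis-type extremal factorization of $\id_U$ through $\ell_2^n$ that balances $\ell$ against its trace-dual --- and multiply the upper bound on $\ell$ coming from type $2$ of $U$ against the one coming from type $2$ of $U\dualspace$, the two being coupled through a trace-duality identity for $\id_U$. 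This yields $d(U, \ell_2^n) \le c\,TC$, which, fed into the localization step, proves the theorem.

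The hard part is the finite-dimensional comparison, and within it step (iv): arranging the Euclidean structure so that the two type-$2$ estimates --- for $U$ and for $U\dualspace$ --- can be multiplied without picking up a factor that grows with $n$. The $\ell$-norm apparatus --- its basis independence, its trace duality, and the existence of an extremal $\ell$-position --- is exactly what makes this possible and constitutes the technical core of Kwapie\'n's original argument; the localization step and the easy direction are, by comparison, routine.
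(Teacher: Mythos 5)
First, note what you are comparing against: the paper does not prove this statement at all --- it is quoted from Kwapie\'n's 1972 paper, with \cite{Yamasaki1984} indicated for a simple proof --- so there is no ``paper proof'' to match; the question is only whether your outline would stand on its own. The easy direction and the localization stage are essentially sound. (Minor simplification: rather than passing through $q$ and the parallelogram law and then worrying whether $q^{1/2}$ is a norm, take the ultrafilter limit of the inner products themselves, $b(x,y) := \lim_{\mathcal U} \innerprod{x}{y}_U$; the limits exist since $\abs{\innerprod{x}{y}_U} \le C^2 \norm{x}_X \norm{y}_X$, and bilinearity, symmetry and the two-sided bound $\norm{x}_X^2 \le b(x,x) \le C^2 \norm{x}_X^2$ are then immediate.)

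The genuine gap is in the finite-dimensional comparison, which you yourself identify as the core and which you only sketch. Step (iii), ``cotype $2$ of $U$ forces type $2$ of $U\dualspace$ with comparable constant,'' is false as stated: the spaces $\ell_1^n$ have cotype-$2$ constants bounded by a universal constant, while $T_2(\ell_\infty^n)$ grows like $\sqrt{\log n}$, so cotype $2$ alone does not dualize. What is true, and what the argument needs, is $T_2(U\dualspace) \le K(U)\, C_2(U)$, where $K(U)$ is the K-convexity constant, together with a bound on $K(U)$ in terms of $T_2(U)$; the latter is Pisier's K-convexity theorem, a substantial result (and one that postdates Kwapie\'n's paper --- his original argument runs differently, through Gaussian measures and a comparison of Gaussian with Rademacher averages). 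Step (iv), the extremal $\ell$-position and the trace-duality multiplication of the two estimates, is likewise asserted rather than carried out. So as written the proposal is a correct roadmap of the modern proof, not a proof: the two steps you defer are precisely where all the work lies. Since the paper itself treats the theorem as a citation, deferring to \cite{Kwapien1972} or \cite{Yamasaki1984} would be the appropriate course; if instead you intend a self-contained argument, step (iii) must be repaired as above and steps (iii)--(iv) must actually be proved.
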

Note that in \cite{Kwapien1972}, the names type and cotype were not yet established.
Kwapień used the notion of the so called two-sided Khinchine inequality.
We also refer to \cite{Yamasaki1984} for a simple proof of Kwapień's theorem.
From this proof, one obtains that
for every space $X$ with type $2$ and cotype $2$,
there exists a Hilbert space
such that
the Banach--Mazur distance
between $X$ and $H$
is bounded by the product $T_2(X) C_2(X)$.

Now, we are in position to prove our main result.
\begin{theorem}
	\label{thm:isomorphic_to_Hspace}
	Let \Cref{assu:C1_strongly_cvx_Lip_der} be fulfilled.
	Then, $X$ is isomorphic to a Hilbert space.
\end{theorem}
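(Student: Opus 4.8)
The plan is to verify that $X$ has type $2$ and cotype $2$ with explicit constants, and then invoke \cref{thm:kwapien}. By \cref{lem:inner_product_on_fin_dim_space}, every finite-dimensional subspace $U \subseteq X$ carries an inner product norm $\norm{\cdot}_U$ satisfying $\sqrt\mu\,\norm{x}_X \le \norm{x}_U \le \sqrt L\,\norm{x}_X$ for all $x \in U$. Fix $n \in \N$ and $x_1,\dots,x_n \in X$, and let $U := \lin\{x_1,\dots,x_n\}$, a finite-dimensional subspace. Since $(U,\norm{\cdot}_U)$ is a Hilbert space, it has type $2$ and cotype $2$ with constant $1$; that is,
\[
	\frac{1}{2^n}\sum_{\varepsilon \in E(n)} \norm*{\sum_{k=1}^n \varepsilon_k x_k}_U^2
	=
	\sum_{k=1}^n \norm{x_k}_U^2 ,
\]
the equality being the parallelogram-type identity valid in any inner product space for Rademacher sums.

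First I would translate this identity into a statement about $\norm{\cdot}_X$ using the two-sided norm equivalence from \cref{lem:inner_product_on_fin_dim_space}. For the type-$2$ estimate, bound the left-hand side from below and the right-hand side from above:
\[
	\mu \cdot \frac{1}{2^n}\sum_{\varepsilon \in E(n)} \norm*{\sum_{k=1}^n \varepsilon_k x_k}_X^2
	\le
	\frac{1}{2^n}\sum_{\varepsilon \in E(n)} \norm*{\sum_{k=1}^n \varepsilon_k x_k}_U^2
	=
	\sum_{k=1}^n \norm{x_k}_U^2
	\le
	L \sum_{k=1}^n \norm{x_k}_X^2 ,
\]
so that $X$ is of type $2$ with $T_2(X) \le \sqrt{L/\mu}$. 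Symmetrically, for cotype $2$ one bounds the left side of the identity from above by $L$ times the $X$-norm version and the right side from below by $\mu$ times the $X$-norm version, yielding
\[
	\frac{1}{2^n}\sum_{\varepsilon \in E(n)} \norm*{\sum_{k=1}^n \varepsilon_k x_k}_X^2
	\ge
	\frac{\mu}{L} \sum_{k=1}^n \norm{x_k}_X^2 ,
\]
so $C_2(X) \le \sqrt{L/\mu}$ as well. Crucially, these constants do not depend on $n$ or on the choice of the $x_k$ (the inner product $\innerprod{\cdot}{\cdot}_U$ may vary with $U$, but the equivalence constants $\sqrt\mu$, $\sqrt L$ are uniform), so the definitions in \cref{def:type_cotype} are genuinely satisfied.

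With type $2$ and cotype $2$ established, \cref{thm:kwapien} immediately gives that $X$ is isomorphic to a Hilbert space, which completes the proof. I do not anticipate a serious obstacle here: the only point requiring care is the uniformity of the constants across all finite-dimensional subspaces — this is exactly what the ``In particular'' clause of \cref{lem:inner_product_on_fin_dim_space} provides — together with the observation that every Rademacher sum $\sum_k \varepsilon_k x_k$ lies in the single finite-dimensional space $U = \lin\{x_1,\dots,x_n\}$, so that a single inner product governs all $2^n$ terms appearing in the defining inequalities. As a byproduct, the Banach--Mazur distance of $X$ to a Hilbert space is at most $T_2(X)C_2(X) \le L/\mu$.
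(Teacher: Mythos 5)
Your proposal is correct and takes essentially the same route as the paper: apply \cref{lem:inner_product_on_fin_dim_space} to $U = \lin(x_1,\dots,x_n)$, use the Rademacher-sum identity in the inner-product norm (which the paper obtains by expanding the square and cancelling the cross terms) together with the uniform equivalence constants $\sqrt\mu$, $\sqrt L$ to get type $2$ and cotype $2$, and conclude via \cref{thm:kwapien}. The only (inessential) difference is your byproduct bound $L/\mu$ on the Banach--Mazur distance via $T_2(X)C_2(X)$, which is weaker than the $\sqrt{L/\mu}$ remarked upon in the paper but plays no role in the theorem itself.
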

\begin{proof}
	We use \Cref{thm:kwapien}.
	Let $n \in \N$ and $x_1, \ldots, x_n \in X$
	be arbitrary.
	We set $U := \lin(x_1, \ldots, x_n)$.
	We show the second inequality first.
	As $U$ is finite-dimensional,
	\Cref{lem:inner_product_on_fin_dim_space} can be applied
	and yields a scalar product $\innerprod{\cdot}{\cdot}_U$ on $U$.
	Thus,
	\begin{align*}
		\frac \mu {2^n} \sum_{\varepsilon \in E(n)} \norm*{\sum_{k=1}^n \varepsilon_k x_k}_X^2
		&\le
		\frac {1} {2^n}
		\sum_{\varepsilon \in E(n)} \norm*{\sum_{k=1}^n \varepsilon_k x_k}_U^2
		\\
		&=
		\frac {1} {2^n}
		\sum_{\varepsilon \in E(n)}
		\left(
		\sum_{k=1}^n \norm{x_k}_U^2
		+
		\sum_{\substack{k,l=1 \\ k \ne l}}^n \varepsilon_k \varepsilon_l \innerprod{x_k}{x_l}_U
		\right)
	\end{align*}
	The first sum inside the brackets does not depend on $\varepsilon$ anymore, thus the same sum is added $2^n$ times.
	The second inner sum can be swapped with the outer sum over $\varepsilon$
	and, due to the definition of $E(n)$, we have
	$\sum_{\varepsilon \in E_n} \varepsilon_k \varepsilon_l = 0$ for $k \ne l$.
	Consequently, the second inner sum disappears.
	Thus we get
	\begin{equation*}
		\frac \mu {2^n} \sum_{\varepsilon \in E(n)} \norm*{\sum_{k=1}^n \varepsilon_k x_k}_X^2
		\le
		\left(
		\sum_{k=1}^n \norm{x_k}_U^2
		+
		\frac 1 {2^{n}}
		\sum_{\substack{k,l=1 \\ k \ne l}}^n
		(2^{n-1} - 2^{n-1}) \innerprod{x_k}{x_l}_U
		\right)
		\le
		L \sum_{k=1}^n \norm{x_k}_X^2
		.
	\end{equation*}
	Since the constants $\mu$ and $L$
	do not depend on the subspace $U$,
	this shows that $X$ is of type $2$.
	Analogously, one can reverse all these inequalities while swapping $\mu$ and $L$.
	Thus $X$ is of cotype $2$.
	\cref{thm:kwapien} yields the claim.
\end{proof}

From the proof, we see that the Banach--Mazur distance of $X$
to a Hilbert space can be bounded by $\sqrt{L/\mu}$.

Finally,
we check that these considerations
can be slightly generalized.
\begin{theorem}
	\label{thm:operator}
	Let $X$ be a Banach space.
	Further, let an operator $T \colon X \to X\dualspace$ be given
	such that $T$ is Lipschitz continuous and strongly monotone
	on $U_\varepsilon^X(\bar x)$ for some $\bar x \in X$ and $\varepsilon > 0$,
	i.e.,
	there exist $\mu, L > 0$
	such that
	\begin{align*}
		\norm{T(x) - T(y)}_{X\dualspace} &\le L \norm{x - y}_X,  \\
		\dual{T(x) - T(y)}{x - y}_X &\ge \mu \norm{x - y}_X^2
	\end{align*}
	holds for all $x,y \in U_\varepsilon^X(\bar x)$.
	Then $X$ is isomorphic to a Hilbert space.
\end{theorem}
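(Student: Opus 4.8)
The plan is to reduce to \cref{thm:isomorphic_to_Hspace} by reproving \cref{lem:inner_product_on_fin_dim_space} with the derivative $f'$ replaced by the operator $T$. Indeed, the proof of \cref{thm:isomorphic_to_Hspace} uses nothing about $f$ beyond the conclusion of that lemma, namely that every finite-dimensional subspace carries an inner product whose norm is equivalent to $\norm{\cdot}_X$ with the fixed constants $\sqrt\mu$ and $\sqrt L$. As there, we may assume $\bar x = 0$ (replace $T$ by $x \mapsto T(x + \bar x)$, which is again $L$-Lipschitz and $\mu$-strongly monotone on $U_\varepsilon^X(0)$), so that $0$ lies in every subspace $U$ and $U \cap U_\varepsilon^X(0)$ is a nonempty open ball in $U$.

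So fix a finite-dimensional subspace $U \subseteq X$ and consider $T_U \colon U \cap U_\varepsilon^X(0) \to U\dualspace$, $T_U(u) := T(u)|_U$, i.e., $T$ followed by the restriction map $X\dualspace \to U\dualspace$. It is $L$-Lipschitz, since
\[
	\norm{T_U(u) - T_U(v)}_{U\dualspace}
	= \sup_{h \in U,\, \norm{h}_X \le 1} \dual{T(u) - T(v)}{h}_X
	\le \norm{T(u) - T(v)}_{X\dualspace}
	\le L \norm{u - v}_X .
\]
As $U$ and $U\dualspace$ are finite-dimensional, Rademacher's theorem provides a point $\bar u \in U \cap U_\varepsilon^X(0)$ at which $T_U$ is differentiable; set $A := T_U'(\bar u) \in \mathcal{L}(U, U\dualspace)$. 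Evaluating the strong monotonicity and Lipschitz estimates for $T$ at $\bar u + t h$ and $\bar u$, dividing by $t^2$ resp. $\abs t$, and letting $t \to 0$ yields
\[
	\mu \norm{h}_X^2 \le \dual{A h}{h}_U \le L \norm{h}_X^2 \qquad \forall h \in U .
\]

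In contrast to the gradient case, $A$ need not be symmetric, since we can no longer invoke Schwarz's theorem for a second derivative. This is the only genuine difference, and it is harmless: replacing $A$ by its symmetric part does not change the quadratic form $h \mapsto \dual{A h}{h}_U$. Concretely, define $\innerprod{u}{v}_U := \tfrac12\parens[\big]{\dual{A u}{v}_U + \dual{A v}{u}_U}$ for $u,v \in U$. This is bilinear and symmetric by construction and positive definite because $\innerprod{u}{u}_U = \dual{A u}{u}_U \ge \mu \norm{u}_X^2 > 0$ for $u \ne 0$; hence it is an inner product on $U$. Since $\innerprod{h}{h}_U = \dual{A h}{h}_U$, the two-sided bound above gives $\sqrt\mu \norm{h}_X \le \norm{h}_U \le \sqrt L \norm{h}_X$ for all $h \in U$, which is precisely the conclusion of \cref{lem:inner_product_on_fin_dim_space}. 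From here the proof of \cref{thm:isomorphic_to_Hspace} applies verbatim: for arbitrary $x_1, \ldots, x_n \in X$ one works in $U := \lin(x_1, \ldots, x_n)$, uses the expansion of $\norm{\sum_k \varepsilon_k x_k}_U^2$ together with $\sum_{\varepsilon \in E(n)} \varepsilon_k \varepsilon_l = 0$ for $k \ne l$, and concludes that $X$ has type $2$ and cotype $2$ with constants controlled by $\sqrt{L/\mu}$; \cref{thm:kwapien} then yields that $X$ is isomorphic to a Hilbert space.
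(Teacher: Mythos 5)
Your proof is correct and follows essentially the same route as the paper: restrict $T$ to a finite-dimensional subspace, apply Rademacher's theorem to get a differentiability point, bound the resulting quadratic form between $\mu$ and $L$, symmetrize the (possibly non-symmetric) derivative to obtain the inner product of \cref{lem:inner_product_on_fin_dim_space}, and then run the type-$2$/cotype-$2$ argument of \cref{thm:isomorphic_to_Hspace} and \cref{thm:kwapien}. The paper's proof is a terser version of exactly this, with the symmetrization $A := (B + B^*)/2$ being the same key observation you make.
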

\begin{proof}
	We briefly check that the assertion of \cref{lem:inner_product_on_fin_dim_space}
	is valid in our situation.
	In fact, we can repeat the proof of \cref{lem:inner_product_on_fin_dim_space}
	with some minor modifications.
	For a finite-dimensional subspace $U$, we consider the operator
	$S \colon U \to U\dualspace$, $U \ni u \mapsto T(u)|_U \in U\dualspace$.
	Due to Rademacher's theorem, $S$ is differentiable at some $\bar u \in U$
	and we set $B := S'(\bar u) \in \mathcal{L}(U, U\dualspace)$.
	The only difference to the situation in \cref{lem:inner_product_on_fin_dim_space}
	is that the bilinear form associated with $B$ might not be symmetric,
	but we can symmetrize $A := (B + B^*)/2$.
	The remaining steps of the proof are identical.
\end{proof}

\section{Function and its conjugate being $C^2$}
In this section we consider the case that a function and its convex conjugate
are twice continuously differentiable.
\begin{theorem}
	\label{thm:is_Hilbert}
	Let $X$ be a Banach space,
	$f \in C^2(X)$ is convex
	and its convex conjugate satisfies $f^* \in C^2(X^*)$.
	Then, $X$ is isomorphic to a Hilbert space.
\end{theorem}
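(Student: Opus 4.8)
The plan is to produce, out of the second derivative of $f$ at a single point, a bounded and coercive symmetric bilinear form on $X$; such a form is an inner product inducing a norm equivalent to $\norm{\cdot}_X$, so that completeness of $X$ makes $(X,\norm{\cdot}_X)$ Hilbertizable.

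First I would set up the duality between $f$ and $f^*$. Since $f$ is convex and finite-valued (hence continuous and, in particular, proper and lower semicontinuous), Fenchel--Moreau gives $f = f^{**}$. Because $f \in C^1(X)$ and $f^* \in C^1(X^*)$, the subdifferentials are the singletons $\partial f(x) = \{f'(x)\}$ and $\partial f^*(y) = \{(f^*)'(y)\}$, so the Fenchel--Young equivalence $y \in \partial f(x) \iff x \in \partial f^*(y)$ reads $y = f'(x) \iff x = (f^*)'(y)$. Hence $f' \colon X \to X^*$ and $(f^*)' \colon X^* \to X$ are \emph{mutually inverse bijections}. Both maps are of class $C^1$, so differentiating the identities $(f^*)' \circ f' = \id_X$ and $f' \circ (f^*)' = \id_{X^*}$ with the chain rule gives, for every $\bar x \in X$ with $\bar y := f'(\bar x)$,
\[
	(f^*)''(\bar y)\, f''(\bar x) = \id_X,
	\qquad
	f''(\bar x)\, (f^*)''(\bar y) = \id_{X^*}.
\]
Therefore $A := f''(\bar x) \in \mathcal{L}(X, X^*)$ is a topological isomorphism with two-sided inverse $A^{-1} = (f^*)''(\bar y) \in \mathcal{L}(X^*, X)$; I would simply fix $\bar x = 0$.

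Next I would exploit the structure of $A$. By the symmetry of second Fréchet derivatives (Cartan's theorem, already used in \cref{lem:inner_product_on_fin_dim_space}), the bilinear form $B(u,v) := \dual{Au}{v}_X$ on $X$ is symmetric, and convexity of $f$ makes it positive semidefinite. The bound $B(u,u) \le \norm{A}\, \norm{u}_X^2$ is trivial. For coercivity I would use the Cauchy--Schwarz inequality for the semidefinite form $B$ with the test vector $v = A^{-1}\phi$ for $\phi \in X^*$: by symmetry $B(u, A^{-1}\phi) = \dual{\phi}{u}_X$ and $B(A^{-1}\phi, A^{-1}\phi) = \dual{\phi}{A^{-1}\phi}_X \le \norm{A^{-1}}\, \norm{\phi}_{X^*}^2$, whence $\abs{\dual{\phi}{u}_X}^2 \le \norm{A^{-1}}\, \norm{\phi}_{X^*}^2\, B(u,u)$; taking the supremum over $\norm{\phi}_{X^*} \le 1$ yields $B(u,u) \ge \norm{A^{-1}}^{-1} \norm{u}_X^2$. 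Thus $B$ is an inner product on $X$ whose induced norm is equivalent to $\norm{\cdot}_X$, so $(X, B(\cdot,\cdot)^{1/2})$ is complete, i.e.\ a Hilbert space isomorphic to $X$ (and the Banach--Mazur distance of $X$ to a Hilbert space is at most $\norm{A}^{1/2}\norm{A^{-1}}^{1/2}$).

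The hard part is the first paragraph: turning the informal statement that $f'$ and $(f^*)'$ are inverse to each other into a rigorous one — in particular that $f'$ is a bijection onto all of $X^*$, which is exactly where the hypothesis $f^* \in C^2(X^*)$ (defined on the whole dual) is used — and then applying the chain rule to deduce that $f''(\bar x)$ is invertible. Once invertibility of $f''(\bar x)$ is in hand, the passage to an equivalent Hilbertian norm is the short Cauchy--Schwarz computation above, and, in contrast to \cref{thm:isomorphic_to_Hspace}, no appeal to type and cotype is needed.
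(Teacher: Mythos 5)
Your argument is, in substance, the paper's proof for the \emph{reflexive} case, and for a general Banach space it has a genuine gap exactly at the point you yourself call ``the hard part'' but then treat as settled. Without reflexivity, the derivative of $f^*$ maps $X^*$ into $X^{**}$, not into $X$, so the typing $(f^*)' \colon X^* \to X$ is already unjustified. What Fenchel--Young (plus differentiability of $f^*$) actually gives is only the one-sided identity $(f^*)' \circ f' = \iota_X$, where $\iota_X \colon X \to X^{**}$ is the canonical embedding: if $y^* = f'(x)$, then $\iota_X x \in \partial f^*(y^*) = \{(f^*)'(y^*)\}$. The converse statement you need --- that every $y^* \in X^*$ lies in the range of $f'$, equivalently that $(f^*)'(y^*) \in \iota_X(X)$, equivalently that the continuous convex function $f - \dual{y^*}{\cdot}_X$ attains its infimum --- does not follow from $f^* \in C^2(X^*)$ in a non-reflexive space; asserting that $f'$ and $(f^*)'$ are mutually inverse bijections presupposes reflexivity (true a posteriori, but circular here), and the paper explicitly flags this failure in its first proof. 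As a consequence you only get $BA = \iota_X$ with $B := (f^*)''(f'(\bar x)) \in \mathcal{L}(X^*, X^{**})$, not the two-sided invertibility $AB = \id_{X^*}$, and your coercivity step breaks down because it tests with $v = A^{-1}\phi$ for arbitrary $\phi \in X^*$, i.e.\ it needs surjectivity of $A$.

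The good news is that your scheme can be repaired without reflexivity, and then it genuinely differs from (and is shorter than) the paper's general-case proof. From $BA = \iota_X$ one gets $\norm{u}_X = \norm{BAu}_{X^{**}} \le \norm{B}\,\norm{Au}_{X^*}$, and the Cauchy--Schwarz inequality for the symmetric positive-semidefinite form $a(u,v) := \dual{Au}{v}_X$ gives $\abs{a(u,v)}^2 \le a(u,u)\,\norm{A}\,\norm{v}_X^2$, hence $\norm{Au}_{X^*}^2 \le \norm{A}\, a(u,u)$ and therefore $a(u,u) \ge \norm{u}_X^2 / (\norm{A}\,\norm{B}^2)$; this yields the equivalent Hilbertian norm directly, with no need for surjectivity of $f'$. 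By contrast, the paper handles the non-reflexive case by showing that $f$ itself satisfies \cref{assu:C1_strongly_cvx_Lip_der} (a local descent lemma for $f^*$ combined with Fenchel--Young gives local strong convexity of $f$) and then invoking \cref{thm:isomorphic_to_Hspace}, i.e.\ the type-2/cotype-2 characterization of Kwapie\'n. So: fix the first paragraph by replacing ``mutually inverse bijections'' with the one-sided identity $(f^*)' \circ f' = \iota_X$ and adapt the coercivity estimate as above; as currently written, the proof only covers reflexive $X$.
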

In the case that $X$ is additionally reflexive,
we give a simple and self-contained proof.
\begin{proof}[Proof ($X$ reflexive)]
	Let $\bar x \in X$ be arbitrary.
	We define
	$A := f''(\bar x) \in \mathcal{L}(X,X^*)$ and $B := (f^*)''(f'(\bar x)) \in \mathcal{L}(X^*,X)$.
	
	It is well-known that $x^* \in \partial f(x)$ and $x \in \partial f^*(x^*)$ are equivalent.
	This and the differentiability imply
	$x = (f^*)'(f'(x))$ for all $x \in X$
	and
	$x^* = f'((f^*)'(x^*))$ for all $x^* \in X^*$.
	We differentiate this and obtain
	$\id_X = (f^*)''(f'(\bar x)) f''(\bar x) = BA$
	and
	$\id_{X^*} = A B$.
	In particular, $A$ and $B$ are invertible.
	Note that this argument fails in case that $X$ is not reflexive.
	
	We define the function $g : X \to \R$ via
	$g(x) := \frac 1 2 A[x,x] := \frac 1 2 \dual{Ax}{x}_X$.
	The symmetry of $A[\cdot, \cdot]$ implies
	$\dual{A^* x}{y}_X = \dual{Ay}{x}_X = A[y,x] = A[x,y] = \dual{Ax}{y}_X$
	for all $x,y \in X$
	and thus self-adjointness.
	Similarly, $B$ is self-adjoint.
	We calculate the convex conjugate of $g$ by
	\begin{align*}
		g^*(x^*)
		&=
		\sup_{x \in X} \dual{x^*}{x}_X - \frac 1 2 A[x,x]
		=
		\sup_{x \in X} \frac 1 2 B[x^*,x^*] - \frac 1 2 A[Bx^* - x, Bx^* - x]
		\\&
		=
		\frac 1 2 B[x^*,x^*]
		.
	\end{align*}
	Here, we used the symmetry of $A$ and $\id_{X^*} = A B$.
	Further, the derivative of $g^*$ is given by
	$(g^*)'(x^*) = \frac 1 2 B x^* + \frac 1 2 B^* x^* = B x^* $ which is a Lipschitz continuous function.
	It is well-known that this implies strong convexity of $g$ for some constant $\mu > 0$,
	see, e.g., \cite[Proposition~3.5.3]{Zalinescu2002}.
	
	Strong convexity of $g$ and $g'(0) = A(0) = 0$ yield
	\[
	\frac 1 2 A[x,x]
	=
	g(x) - g(0) - g'(0)(x - 0)
	\ge
	\frac \mu 2 \norm{x - 0}_{X}^2
	.
	\]
	Thus, we get
	$\mu \norm{x}_{X}^2 \le A[x,x] \le \norm{A} \norm{x}_{X}^2$.
	This shows that $A[\cdot,\cdot]$ is an inner product on $X$
	and the associated norm is equivalent to the original norm.
\end{proof}
Note that the function $g$ constructed in the proof fulfills \Cref{assu:C1_strongly_cvx_Lip_der},
but we do not employ \cref{thm:isomorphic_to_Hspace}
explicitly.

In absence of the reflexivity of $X$
we show that the assumptions of \cref{thm:is_Hilbert}
imply \cref{assu:C1_strongly_cvx_Lip_der}
and this yields a proof of \cref{thm:is_Hilbert}
in the general situation.
\begin{proof}[Proof (general case)]
	W.l.o.g., we assume $0 = f'(0)$
	(otherwise we subtract the linear function $f'(0) \in X^*$ from $f$).
	As $f^*$ is $C^2$, there exist constants $\varepsilon, L > 0$ with $\norm{(f^*)''(x^*)} \le L$ for all $x^* \in B_\varepsilon^{X^*}(0)$.
	Thus, for all $x^*,y^* \in B_\varepsilon^{X^*}(0)$ it holds
	\[
	\norm{(f^*)'(x^*) - (f^*)'(y^*)}_{X^{**}}
	\le
	\sup_{\xi \in [x^*,y^*]} \norm{(f^*)''(\xi)} \norm{x^* - y^*}_{X^*}
	\le
	L \norm{x^* - y^*}_{X^*}
	,
	\]
	where we used the mean-value inequality,
	see \cite[Proposition~3.3.1]{Cartan1967}.
	Thus, $(f^*)'$ is locally Lipschitz at $0$.
	
	We are going to prove a local descent lemma.
	Let $\varphi(t) := f^*(x^* + t(y^*-x^*))$. Obviously, $\varphi$ is $C^1$.
	Thus, for $x^*,y^* \in B_\varepsilon^{X^*}(0)$
	we have
	\begin{align*}
		\MoveEqLeft
		f^*(y^*) - f^*(x^*)
		-
		\dual{(f^*)'(x^*)}{y^*-x^*}_{X^*}
		=
		\varphi(1) - \varphi(0) - \varphi'(0)
		\\
		&=
		\int_0^1 \varphi'(t) - \varphi'(0) \dt
		=
		\int_0^1 \dual{(f^*)'(x^*+t(y^*-x^*)) - (f^*)'(x^*)}{y^*-x^*}_{X^*} \dt
		.
	\end{align*}
	As $x^*+t(y^*-x^*)$ and $x^*$ are both in $B_\varepsilon^{X^*}(0)$, we can use the local Lipschitz continuity and get
	\begin{align*}
		f^*(y^*) - f^*(x^*) - \dual{(f^*)'(x^*)}{y^*-x^*}_{X^*}
		&\le
		\int_0^1 L t \norm{y^*-x^*}_{X^*}^2 \dt
		=
		\frac L 2 \norm{y^*-x^*}_{X^*}^2
		.
	\end{align*}
	As $f'$ is continuous, we find $\delta > 0$ with
	$\norm{f'(x)}_{X^*} = \norm{f'(x) - f'(0)}_{X^*} \le \varepsilon/2$
	for all $x$ with $\norm{x - 0}_{X} \le \delta$.
	
	Now, we can show local strong convexity.
	Let $\lambda \in (0,1)$ and $x,y \in B_{\min(\delta, \varepsilon L / 4)}^{X}(0)$ be given.
	Further, let $x^*,y^* \in B_{\varepsilon/2}^{X^*}(0)$ be arbitrary.
	For these we have
	$\norm{x^* - \lambda y^*}_{X^*}, \norm{x^* + (1-\lambda) y^*}_{X^*} \le \norm{x^*}_{X^*} + \norm{y^*}_{X^*} \le \varepsilon$
	which allows us to use the above local descent lemma.
	Together with the Fenchel--Young inequality, this yields
	\begin{align*}
		\MoveEqLeft
		\lambda f(x) + (1-\lambda) f(y)
		\\
		&\ge
			\lambda
			\left[
			\dual{x^*+(1-\lambda)y^*}{x} - f^*(x^*+(1-\lambda)y^*)
			\right]			
			\\
			&\qquad+ (1-\lambda)
			\left[
			\dual{x^*-\lambda y^*}{y} - f^*(x^*-\lambda y^*)
			\right]
		\\
		&\ge
			\lambda
			\left[
				\dual{x^*+(1-\lambda)y^*}{x} - f^*(x^*)
				- \dual{(f^*)'(x^*)}{(1-\lambda)y^*} - \frac L 2 \norm{(1-\lambda)y^*}_{X^*}^2
			\right]
			\\
			&\qquad+ (1-\lambda)
			\left[
				\dual{x^*-\lambda y^*}{y} - f^*(x^*)
				- \dual{(f^*)'(x^*)}{-\lambda y^*} - \frac L 2 \norm{-\lambda y^*}_{X^*}^2
			\right]
		\\
		&=
			\lambda \dual{x^*+(1-\lambda)y^*}{x}
			+ (1-\lambda) \dual{x^*-\lambda y^*}{y}
			- f^*(x^*)
			- \frac L 2 \lambda (1-\lambda) \norm{y^*}_{X^*}^2
		\\
		&=
		\bracks[\big]{
		\dual{x^*}{\lambda x + (1-\lambda) y}
		- f^*(x^*)
		}
		+ \lambda (1-\lambda)
		\left[ \dual{y^*}{x - y} - \frac L 2 \norm{y^*}_{X^*}^2 \right]
	\end{align*}
	for all $x^*,y^* \in B_{\varepsilon/2}^{X^*}(0)$.
	Due to
	$\norm{\lambda x + (1-\lambda) y} \le \delta$,
	we set
	$x^* = f'(\lambda x + (1-\lambda) y) \in B_{\varepsilon/2}^{X^*}(0)$
	and with this choice, the first bracketed term on the last line
	equals $f(\lambda x + (1-\lambda) y)$.
	
	Similarly,
	we select an arbitrary
	$y^* \in \partial (\frac 1 {2L} \norm{\cdot}_X^2)(x-y) \ne \emptyset$.
	Together with $(\frac 1 {2L} \norm{\cdot}_X^2)^* = \frac L {2} \norm{\cdot}_{X^*}^2$,
	we get
	\[
		\frac L 2 \norm{y^*}_{X^*}^2 + \frac 1 {2L} \norm{x-y}_X^2
		=
		\dual{y^*}{x-y}
		\le
		\norm{y^*}_{X^*} \norm{x-y}_X
	\]
	It is easy to check that this implies
	\[
		\norm{y^*}_{X^*}
		=
		\frac 1 L
		\norm{x-y}_X
		\le
		\frac 2 L \frac{\varepsilon L}{4}
		=
		\frac \varepsilon 2
	\]
	and, thus, we are allowed to insert this $y^*$ above.
	The second bracketed term becomes
	$\frac 1 {2L} \norm{x - y}_{X}^2$.
	Altogether, this shows
	\begin{equation*}
		\lambda f(x) + (1-\lambda) f(y)
		\ge
		f(\lambda x + (1-\lambda) y)
		+ \frac 1 {2L} \lambda (1-\lambda) \norm{x - y}_{X}^2
		,
	\end{equation*}
	i.e., strong convexity on $B_{\min(\delta, \varepsilon L / 4)}(0)$.
	In total,
	the function $f$ is strongly convex with constant $1/L$ on that ball and we can find a smaller ball where $f'$ is locally Lipschitz as $f$ is $C^2$.
\end{proof}

From these proofs,
it is also clear that
the assumptions of \cref{thm:is_Hilbert}
can be localized.
Indeed, it is sufficient that $f$ is $C^2$ in a neighborhood of some $x \in X$,
while $f^*$ is $C^2$ in a neighborhood of $f'(x)$.

\printbibliography

\end{document}